\numberwithin{equation}{section}
\newtheorem{thm}{Theorem}[section]
\newtheorem{lemma}[thm]{Lemma}
\newtheorem{prop}[thm]{Proposition}
\newtheorem{cor}[thm]{Corollary}
\newcommand{\comment}[1]{}
\newcommand{\R}{{\mathbb R}}
\renewcommand{\L}{{\mathbb L}}
\newcommand{\prob}{\nu}
\newcommand {\E}[1]{\int#1\mbox{d}\nu}
\newcommand{\abs}[1]{\left|#1\right|}
\newcommand{\PHI}[1]{\Phi\circ f^{#1}}
\title[Extreme value theory for skew-product dynamical systems]{Extreme Value Distributions for some classes of Non-Uniformly Partially Hyperbolic Dynamical Systems}
\subjclass[2000]{Primary 37A50; Secondary 60G70}
\keywords{Extreme Value Theory, skew-extensions, dynamical systems}
{\author[Chinmaya Gupta]{Chinmaya Gupta}
\address[Chinmaya Gupta]{Department of Mathematics, University of Houston, 4800 Calhoun Rd., Houston, TX, 77204, USA}
\email{ccgupta@math.uh.edu}
\urladdr{http://math.uh.edu/~ccgupta}
}
\date{\today}
\begin{document}

\maketitle

\begin{abstract}
In this note, we obtain verifiable sufficient conditions for the extreme value distribution for a certain class of skew product extensions of non-uniformly hyperbolic base maps. We show that these conditions, formulated in terms of the decay of correlations on the product system and the measure of rapidly returning points on the base, lead to a distribution for the maximum of $\Phi(p) = -\log(d(p, p_0))$ that is of the first type. In particular, we establish the Type I distribution for $S^1$ extensions of piecewise $C^2$ uniformly expanding maps of the interval, non-uniformly expanding maps of the interval modeled by a Young Tower, and a skew product extension of a uniformly expanding map with a curve of neutral points.

\end{abstract}

\section{Introduction}
Suppose that $\{X_i\}$ is a stochastic process and we define a stochastic
process $\{M_n\}$ of successive maxima by $M_n = \max \{X_1, \ldots , X_n \}$. Extreme value
theory is concerned with the limiting distribution of $\{M_n\}$ under
linear scalings $a_n (M_n -b_n)$ defined by constants $a_n>0, b_n \in \R$.
In the 
i.i.d case there is a well-developed theory~\cite{Leadbetter,Galambos,Resnick} 
and  it is known that there are
only three possible non-degenerate distributions under linear scaling i.e.,
if $\{X_i \}$ is i.i.d., $a_n>0,b_n \in \R$ are scaling constants and $G(x)$
is a non-degenerate distribution defined by
\[
\lim_{n\rightarrow \infty} P( a_n (M_n - b_n)\le x)=G(x)
\]
then $G(x)$ has one of three possible forms (up to scale and location
changes), which we call extreme type
  distributions.
  

Collet \cite{collet} studied the return time statistics to a point $x_0$ in the phase
space of a one-dimensional non-uniformly expanding map modeled by a Young
Tower with exponential decay of correlations. He notes that his work can be
interpreted in terms of extreme value statistics for such systems.  Collet
showed that the function $F(x) = -\log d(x, x_0)$ on the systems he
considered displays Type I extreme value statistics for $\mu$ a.e. $x_0$.
Freitas and Freitas \cite{Freitas} showed the corresponding result for these maps when
$x_0$ is taken to be the critical point $c$ or critical value $f(c)$.
Freitas, Freitas and Todd \cite{todd} investigated the link between extreme
value statistics and return time statistics, and  showed that
any multimodal map with an absolutely continuous invariant measure displays
either Type I, II or III extreme value statistics.    This
result required no knowledge of the decay of correlations for these maps.
They also proved that for these systems  the excedance point process
converges to
a Poisson  process. Dolgopyat \cite[Theorem 8]{dolgopyat} has proved Poisson limit laws for the return time
statistics of visits to a scaled neighborhood of a measure theoretically
generic point in uniformly partially hyperbolic systems with exponential
decay of correlations for $C^k$ functions. He also gives distributional
limits for periodic orbits, but again exponential decay is required and
uniform partial hyperbolicity is assumed.

In this note, using arguments based on  Collet's and recent work by Gou\"ezel~\cite{gouezel} on the rate of decay of correlations for compact group extensions of 
non-uniformly expanding maps we establish, to our knowledge, the first extreme value theory (or return time statistics)  for non-uniformly partially hyperbolic systems.
Our main result is Theorem~\ref{thm.main} which gives verifiable conditions on the base transformation and a sufficient (polynomial) rate of decay of correlations for 
a Type I   extreme value distribution to hold for $\Phi(p)=-\log d(p,p_0)$  for $\mu \times \lambda_Y$ a.e. $p_0=(x_0,\theta_0)\in X\times Y$.
This results in the extreme value statistics for observations of a certain degree of regularity with maxima at such points $p_0$. The sufficient conditions of 
Theorem~\ref{thm.main} are verifiable for a residual set of H\"{o}lder  $S^1$-cocyles over certain classes of maps  recorded in Corollary~\ref{cor.res}.
The maps in this category include piecewise $C^2$ uniformly expanding maps and non-uniformly expanding maps with finite 
derivative which may be modeled by a Young Tower with exponential return time tails (such as logistic or unimodal maps, including the class
studied by Collet). 
We also verify, in section \ref{skew-prod}, that Gou\"ezel's map  satisfies hypotheses of our theorem and hence our results also apply to this map. A key role in our verification is played by results due to Gou\"ezel~\cite{gouezel}, on rates of decay of correlations  for $S^1$ extensions of non-uniformly partially hyperbolic systems.

Further, we verify the conditions on the base transformation for a class of intermittent like maps, including the Liverani-Saussol-Vaienti
map. Unfortunately, the rate of decay of correlations of H\"{o}lder observations on compact group extensions of such systems is not known.
Nevertheless we give a sufficient decay rate to ensure Type I extreme value statistics for $-\log d(p, p_0)$ for $\mu\times\lambda_Y$ a.e. $p_0$.  We believe it plausible that 
for sufficiently small $0<\omega<1$, where the germ of the indifferent fixed point is $x\to x+x^{1+\omega}$, this decay rate holds and will 
be proven to hold. We also verify all but one of the hypotheses of our theorem for the Viana Map. The hypothesis that fails concerns the density of the absolutely continuous invariant measure. It is not known whether the density belongs to $\mathbb{L}^{1+\delta}(\lambda)$ for any $\delta>0$.

\section{Framework of the problem}

Suppose  that $Y$ is  a compact, connected  $M$-dimensional manifold with 
metric $d_Y$ and  $X$  is a
compact  $N$-dimensional 
manifold   with 
metric  $d_X$. We let $D = M+N$ and define a metric on $X\times Y$ by
\begin{equation}
 d((x_1, \theta_1), (x_2, \theta_2)) = \sqrt{d_X(x_1, x_2)^2 + d_Y(\theta_1 , \theta_2)^2}.
\end{equation}
We denote  the Lebesgue measure on $X$  by $\lambda_X$,  the Lebesgue measure on $Y$  by $\lambda_Y$
and the product measure on $X\times Y$ by $\lambda =\lambda_X \times \lambda_Y$.

We will call a function $\Upsilon: X\times Y \to \R$ H\"older continuous of exponent $\zeta$ if there exists some constant $K$ such that $$\abs{\Upsilon(x) - \Upsilon(y)}\le Kd((x_1, \theta_1),(x_2, \theta_2))^{\zeta}$$ for all $(x_1, \theta_1)$ and $(x_2, \theta_2)$ in $X\times Y$. We define the $C^\zeta$ norm of $\Upsilon$ as $$\|\Upsilon\|_{C^\zeta} = \sup_{(x, \theta)\in X\times Y}\abs{\Upsilon(x, \theta)}+\sup_{\substack{(x, \theta), (y,\rho)\in X\times Y \\ (x, \theta)\ne (y, \rho)}}\frac{\abs{\Upsilon(x, \theta) - \Upsilon(y, \rho)}}{d((x, \theta),(y, \rho))^\zeta}.$$

If $T: X\to X$ is a measurable transformation and $u:X\times Y\rightarrow Y$ a measurable function,  then we may define $f$, the {\bf $Y$-skew-extension of $T$ by $u$ }by, $$f:X\times Y\rightarrow X\times Y$$ 
\begin{equation}
 f(x, \theta) = (Tx,  u(x, \theta)).
\end{equation}

We assume that $T:X \to X$ has an ergodic invariant measure $\mu_X$ with support $X$. 
We further assume that $f: X\times Y \to X\times Y$ preserves an invariant probability measure $\nu$, which  has density $H\in \L^{1}(\mu_X\times\lambda_Y)$  and $H$ is locally $\L^p(\lambda)$ for some $p>1$. 

 We are interested in the extreme value statistics of
 observations which are maximized at a  unique point $(x_0, \theta_0)$.  For the  given  point $(x_{0}, \theta_{0})$
 we define  a function $\Phi$ on $X\times Y$ by
  \[
  \Phi(x, \theta) = -\log d((x, \theta),(x_0, \theta_0))
  \]
  (here the dependence on $(x_0,\theta_0)$ is omitted for notational 
  simplicity).  For a given  $v\in\R$ we define  $u_n = v+\frac{1}{D}\log n$ and  denote by $Z_n$ (more precisely $Z_n^{(x_0,\theta_0)}$) the random variable 
  \[
  Z_n^{(x_0,\theta_0)} =\max(\Phi, \Phi\circ f,\dots ,\Phi\circ f^n).
  \]

We will prove the following result:

\begin{thm}\label{thm.main}

Assume that the density $H \in \L^{1+\delta}(\lambda) $(locally) for some $\delta>0$. Let $\kappa>1$ be conjugate to $1+\delta$. Further, assume
\begin{itemize}
\item[(a)]  that there exist constants $C_1>0, \beta>0$ and an increasing function $g(n)\approx n^{D\gamma'}$ (with $0<\gamma'<\frac{\beta}{D}$) such that if 
\[
E_n^X := \left\{x \in X\colon d_X(T^jx, x)<\frac{1}{n}\mbox { for some } j \in \{1,2\dots g(n)\}\right\}
\]
then $\mu_X(E_n^X)<\frac{C_1}{n^\beta}$ 
\item[(b)] that there exists $0<\hat\alpha\le 1$ such that, for all H\"older continuous functions $\Upsilon$ with H\"older exponent $\hat\alpha$,  and $\Psi \in \L^\infty(\nu)$,
\begin{equation}\label{*}
 ~\abs{\int \Psi\circ f^j\Upsilon \mbox{d}\nu - \int\Upsilon\mbox{d}\nu\int\Psi\mbox{d}\nu}\le C_2 \Theta (j) \|\Psi\|_\infty\| \Upsilon\|_{C^{\hat\alpha}} \end{equation}
where $\Theta(j)\le j^{-\alpha}$ and  $\alpha>\frac{\frac{1}{D}\left(1+D\kappa\left(\frac{3}{2}-\frac{1}{\kappa}\right)\right)+\frac{3}{2}}{\min\left\{\gamma', \frac{1}{2}\right\}}$.

\end{itemize}

Then for  $\nu$  a.e. $(x_0,\theta_0)$ and for every  $v\in\R$,  
\begin{equation}
\lim_{n\rightarrow\infty}\nu\left(Z_n^{(x_0, \theta_0)}<u_n\right) = e^{H(x_0, \theta_0)e^{-Dv}}
\end{equation}
\end{thm}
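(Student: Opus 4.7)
The plan is to mimic Collet's proof of Type I extreme value statistics via Leadbetter's conditions $D(u_n)$ and $D'(u_n)$, adapted to the skew-product setting with a density $H$ that is only locally in $\L^{1+\delta}$. First, I rewrite the event $\{Z_n^{(x_0,\theta_0)}<u_n\}$ as the event that $f^i(p)\notin B_n$ for all $i=0,\dots,n$, where $B_n=B((x_0,\theta_0),e^{-u_n})$ is a shrinking ball of $\lambda$-volume of order $e^{-Dv}/n$. For $\nu$-a.e.\ $p_0$ (any Lebesgue point of the density) one expects $n\,\nu(B_n)\to H(x_0,\theta_0)e^{-Dv}$, up to absorbing a dimensional constant; this is the Poisson parameter appearing in the target limit, and the remaining task is to show that the orbit intersections with $B_n$ behave asymptotically like independent events.

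Second, I would partition $\{0,\dots,n-1\}$ into $k_n=\lfloor n/\ell_n\rfloor$ blocks of length $\ell_n$, and within each block discard a trailing gap of length $t_n\ll \ell_n$. A Bonferroni/blocking argument reduces the convergence to two estimates: approximate factorization of the avoidance probabilities across blocks ($D(u_n)$), and absence of short re-returns within a block ($D'(u_n)$). For factorization, approximate $\mathbf{1}_{B_n^c}$ by a H\"older function $\Upsilon_n\in C^{\hat\alpha}$ supported outside $B_n$ but differing from $\mathbf{1}_{B_n^c}$ only on a tubular neighborhood of width $\rho_n$ of $\partial B_n$, with $\|\Upsilon_n\|_{C^{\hat\alpha}}$ of order $\rho_n^{-\hat\alpha}$. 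The $\L^1(\nu)$ approximation error is controlled via H\"older's inequality with exponent $\kappa$ (conjugate to $1+\delta$) against the local $\L^{1+\delta}$ bound on $H$, yielding an error of order $\rho_n^{1/\kappa}(e^{-u_n})^{(D-1)/\kappa}$. Iterating hypothesis (b) with separation $t_n$ over the $k_n$ blocks produces factorization with total error of order $k_n t_n^{-\alpha}\rho_n^{-\hat\alpha}$.

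Third, for the no-clustering condition I split the re-return times $j\in[1,\ell_n]$ into short ($j\le g(n)$) and long ($g(n)<j\le \ell_n$). A short re-return forces $d_X(T^j x,x)\le 2e^{-u_n}$, so the base point lies in $E_n^X$; by (a), $\mu_X(E_n^X)\le C_1 n^{-\beta}$, and a Fubini argument using the local $\L^{1+\delta}$ bound on $H$ absorbs the $Y$-fibre and yields a small total contribution. For a long re-return one applies (b) twice to the H\"older approximations of $\mathbf{1}_{B_n}$ composed with $f^j$, obtaining a bound of the form $\nu(B_n)^2+\Theta(j)\rho_n^{-\hat\alpha}$; summing over $j$ and over the $k_n$ blocks gives an overall error that can be made $o(1)$. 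Combining, $\nu(Z_n^{(x_0,\theta_0)}<u_n)$ is close to $(1-\ell_n\nu(B_n))^{k_n}$, which converges to $\exp(-H(x_0,\theta_0)e^{-Dv})$.

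The main obstacle is the simultaneous choice of the four scales $\ell_n, t_n, \rho_n, g(n)$, as explicit powers of $n$, forcing all error terms to vanish together: (i) $\ell_n\nu(B_n)\to 0$; (ii) the $D(u_n)$-error $k_n t_n^{-\alpha}\rho_n^{-\hat\alpha}\to 0$; (iii) the mollification error $n\rho_n^{1/\kappa}(e^{-u_n})^{(D-1)/\kappa}\to 0$; (iv) the rapid-return error, using $\mu_X(E_n^X)\le C_1 n^{-\beta}$ together with the prescribed $g(n)\approx n^{D\gamma'}$. The elaborate lower bound imposed on $\alpha$ in (b), expressed in terms of $\kappa$, $D$, and $\gamma'$, is precisely the feasibility condition for this system of exponent inequalities to admit a simultaneous solution. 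Once such scales are fixed, the conclusion holds for $\nu$-a.e.\ starting point $p_0$, since Lebesgue-a.e.\ point of $X\times Y$ is a Lebesgue point of $H$ and the exceptional set arising from the quantitative estimates is $\nu$-null.
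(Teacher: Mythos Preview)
Your overall scheme is Collet's blocking argument, which is also what the paper follows, and the use of hypothesis~(b) through a Lipschitz mollification of $\chi_{B_n}$ is essentially the content of the paper's Lemma~\ref{decaycorr}. However, your treatment of the short-return part of $D'(u_n)$ has a genuine gap.

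You write that a short re-return forces $x\in E_n^X$, so by~(a) one has $\mu_X(E_n^X)\le C_1 n^{-\beta}$, and that ``a Fubini argument using the local $\L^{1+\delta}$ bound on $H$ absorbs the $Y$-fibre and yields a small total contribution.'' This does not work. What you actually need is
\[
n\sum_{j=1}^{g(n^{1/D})}\nu\bigl(B_n\cap f^{-j}B_n\bigr)\;\longrightarrow\;0,
\]
and the relevant containment gives only $\nu(B_n\cap f^{-j}B_n)\le \nu\bigl(B_n\cap(\tilde E_{n'}\,)\bigr)$ with $\tilde E_{n'}\subset E^X_{n'}\times Y$. A direct H\"older/Fubini estimate yields at best
\[
\nu\bigl(B_n\cap(E^X_{n'}\times Y)\bigr)\le \|H\|_{1+\delta}^{\mathrm{loc}}\,\lambda(B_n)^{1/\kappa}\;\asymp\; n^{-1/\kappa},
\]
which, multiplied by $n$, diverges since $\kappa>1$; alternatively, bounding by $\nu(E^X_{n'}\times Y)=\mu_X(E^X_{n'})\le C_1 n^{-\beta/D}$ and multiplying by $n$ fails unless $\beta>D$, which is not assumed (indeed, in the paper's intermittent examples $\beta$ is small). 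No combination of H\"older and the global bound on $\mu_X(E_n^X)$ produces the required $o(1/n)$ estimate for a \emph{fixed} center $p_0$.

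The missing ingredient, which is the heart of the paper's Lemma~\ref{dprimeun}, is a Hardy--Littlewood maximal function argument applied to $H\chi_{E_n}$, followed by Borel--Cantelli along a sparse subsequence $n^\gamma$. This yields, for $\lambda$-a.e.\ $p_0$ and all large $n$,
\[
\frac{1}{\lambda(B_r(p_0))}\int_{B_r(p_0)}H\chi_{E_n}\,d\lambda\;\le\; n^{-\psi}
\]
for some $\psi\in(D\gamma',\beta)$, hence $\nu\bigl(B_{1/n}(p_0)\cap E_n\bigr)\lesssim n^{-D-\psi}$. It is precisely this \emph{localized} smallness of $E_n$ near a generic $p_0$ (not any uniform bound) that makes $n\sum_{j\le n^{\gamma'}}\nu(B_n\cap f^{-j}B_n)\to 0$; your ``Fubini argument'' must be replaced by this maximal-inequality step, and the $\nu$-null exceptional set in the conclusion comes from the Borel--Cantelli argument here, not merely from Lebesgue differentiation of $H$.

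A secondary remark: for the factorization step you propose to approximate $\mathbf 1_{B_n^c}$ and then ``iterate (b) over the $k_n$ blocks.'' Be careful that the block event $\bigcap_{i<\ell_n}f^{-i}(B_n^c)$ is an intersection of many preimages, and a product of mollifiers $\prod_i\Upsilon_n\circ f^i$ has H\"older norm that can blow up with $\ell_n$. The paper avoids this by using Collet's telescoping identity (the paper's Lemma~\ref{lemma2b}) so that only correlations of the single-ball indicator $\chi_{B_n}$ against $\chi_{\{Z_m<u_n\}}\in L^\infty$ appear; this keeps the H\"older observable one-dimensional and is what makes the error bound $k_n t_n^{-\alpha}\rho_n^{-\hat\alpha}$ legitimate.
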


While we will prove theorem \ref{thm.main} for an arbitrary fiber $Y$ that is a compact connected $M$-dimensional manifold, our corollaries will involve the special cases $Y = S^1$ and $Y=[0, 1]$. This is because condition $(b)$ of theorem \ref{thm.main} requires a decay of correlations to hold and we only consider examples for which this decay is known to hold. Further, note that we require $0<\hat\alpha\le1$. This is because for the proof of Lemma \ref{decaycorr}, we need $(b)$ of the above theorem to hold for Lipschitz continuous functions having compact support.

We will make the following definitions: A set will be called {\bf residual} if its complement can be written as a countable union of nowhere dense sets. A $C^r$ {\bf cocycle} $h$ on an interval $I$ into a group $Y$  will be defined as a $C^r$ map $h:I\to Y$. If $h$ is a cocycle, the skew-extension $f$ will be defined as $f(x, \theta) = (Tx, \theta+h(x))$. 

We now state the corollaries to the above theorem (see Section~\ref{sec-applications}).
\begin{cor}\label{cor.res}
If  $Y=S^1$ and $T$ is one of the following transformations:
\begin{itemize}
\item[(a)] a piecewise $C^2$ uniformly expanding map $T:I\to I$ of an interval $I$.
\item[(b)] a one-dimensional non-uniformly expanding  map $T:I\to I$ of an interval $I$ with bounded derivative and modeled by a Young Tower with exponential decay of correlations
\end{itemize}
then for a residual set of  H\"{o}lder cocycles  $h:I\to S^1$, for $\mu_X\times \lambda_Y$ a.e. $(x_0,\theta_0)$ and for all $v\in \R$,   
\begin{equation}
\lim_{n\rightarrow\infty}\nu\left(Z_n^{(x_0, \theta_0)}<u_n\right) = e^{H(x_0, \theta_0)e^{-Dv}}.
\end{equation}
\end{cor}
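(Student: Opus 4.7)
The plan is to verify the two hypotheses of Theorem~\ref{thm.main} for each of the listed base maps, which immediately gives the conclusion. In both cases $Y = S^1$, and the skew extension $f(x,\theta) = (Tx, \theta + h(x))$ preserves the product measure $\nu = \mu_X \times \lambda_{S^1}$, since rigid rotations preserve Haar measure on the fibre. Consequently the density of $\nu$ with respect to $\lambda = \lambda_X \times \lambda_{S^1}$ is simply $H(x,\theta) = (d\mu_X/d\lambda_X)(x)$. For piecewise $C^2$ uniformly expanding maps this density is of bounded variation by Lasota--Yorke, hence bounded, so it lies in $\L^{1+\delta}(\lambda)$ for every $\delta > 0$. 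For non-uniformly expanding maps modeled by a Young tower with exponential return time tails, standard estimates place the density in $\L^{1+\delta}(\lambda)$ for some $\delta > 0$. The integrability hypothesis on $H$ is thus automatic in both cases.

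For hypothesis~(a), I would use bounded distortion for the iterates of $T$ to estimate, for each $1 \le j \le g(n)$, the Lebesgue measure of $\{x : d_X(T^j x, x) < 1/n\}$ by $C/n$, and then integrate against the bounded density to obtain
\[
\mu_X(E_n^X) \le \sum_{j=1}^{g(n)} \mu_X\{x : d_X(T^j x, x) < 1/n\} \le C g(n)/n = C n^{D\gamma' - 1}.
\]
Choosing $\gamma' < 1/D$ yields the desired polynomial bound with $\beta = 1 - D\gamma' > 0$. The Young-tower case is technically more delicate but is precisely the rapid-return control carried out by Collet in \cite{collet}, via inducing on the tower together with bounded distortion along branches.

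The substantive step is hypothesis~(b), the polynomial decay of correlations on the skew product, which is also the source of the residual-set conclusion. I would invoke Gou\"ezel's results in \cite{gouezel}: for H\"older $S^1$-cocycles over these base dynamics, he establishes decay rates for $f$ against H\"older versus $\L^\infty$ observables by a fibrewise Fourier decomposition and an analysis of the twisted transfer operators that excludes coboundaries and rational-resonance obstructions to mixing. For each target rate $\alpha_0$, Gou\"ezel's construction produces an open dense set of H\"older cocycles on which $\Theta(j) \le j^{-\alpha_0}$ on the skew product; intersecting over a sequence $\alpha_0 \to \infty$ yields a residual set of cocycles on which the decay beats any polynomial, in particular the explicit threshold in condition~(b).

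The main obstacle is the parameter-matching: checking that the values of $\alpha$, $\gamma'$, $\kappa$, $\hat\alpha$ and $\beta$ produced by the previous steps can be simultaneously chosen to satisfy
\[
\alpha > \frac{\tfrac{1}{D}\bigl(1+D\kappa(\tfrac{3}{2}-\tfrac{1}{\kappa})\bigr)+\tfrac{3}{2}}{\min\{\gamma', \tfrac{1}{2}\}}.
\]
Because Gou\"ezel's residual set allows $\alpha$ to be taken arbitrarily large while the right-hand side is determined only by the regularity of $H$ and by the base map, this matching goes through without difficulty. Once it does, Theorem~\ref{thm.main} applies and gives the asserted Type~I limit at $\nu$-a.e., hence $\mu_X \times \lambda_Y$-a.e., $(x_0, \theta_0)$.
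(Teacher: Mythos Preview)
Your proposal is correct and follows essentially the same approach as the paper: verify the density regularity (bounded for case~(a), $\L^{1+\delta}$ via Collet for case~(b)), establish the rapid-return bound on the base (direct counting for~(a), Collet's estimate for~(b)), and invoke Gou\"ezel's result for the residual set of cocycles with fast decay on the skew product. One small remark: Gou\"ezel in fact gives a single residual set on which the decay is exponential, so your countable-intersection argument is unnecessary; also, to satisfy $\gamma' < \beta/D$ with $\beta = 1 - D\gamma'$ you need $\gamma' < 1/(2D)$ rather than merely $\gamma' < 1/D$, but this is a harmless tightening.
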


\begin{cor}\label{cor.gouezel.map}
 Let $T:S^1\times [0, 1]\to S^1\times[0, 1]$ be the map $T(\omega, x) = (4\omega, T_{\alpha(\omega)}(x))$ where the maps $\alpha$ and $T_\alpha$, an intermittent type map, are as defined in Section \ref{skew-prod}. Suppose that $$\alpha_{\max}<\frac{\min\{\gamma', \frac{1}{2}\}}{\min\{\gamma', \frac{1}{2}\}+\frac{1}{D}\left(1+\frac{D}{2}\right)+\frac{3}{2}}.$$ Then for $\nu$ a.e. $(x_0, \theta_0)$ and for each $v \in \R$,  
\begin{equation}
 \lim_{n\to\infty}\nu\left(Z_n^{(x_0, \theta_0)}<u_n\right) = e^{H(x_0, \theta_0)e^{-Dv}}.
\end{equation}

\end{cor}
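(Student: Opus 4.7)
The plan is to verify the three hypotheses of Theorem~\ref{thm.main} for the map $T(\omega,x) = (4\omega, T_{\alpha(\omega)}(x))$ on $X\times Y = S^1\times[0,1]$ (so $M=N=1$ and $D=2$), and then to check that the numerical bound on $\alpha_{\max}$ stated in the corollary is precisely what is needed for Gou\"ezel's decay-of-correlations estimate for this skew product to beat the rate threshold appearing in Theorem~\ref{thm.main}(b).

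For the density hypothesis: the invariant density $H$ is bounded (indeed, smooth) on compact subsets of $X\times Y$ disjoint from the neutral fiber $\{x=0\}$, which is $\nu$-null. Hence for $\nu$-a.e.~$(x_0,\theta_0)$ there is a neighborhood on which $H$ is bounded, so $H\in\L^{1+\delta}$ locally for $\delta$ arbitrarily large; the conjugate exponent $\kappa=(1+\delta)/\delta$ may then be taken arbitrarily close to $1$, and a brief computation gives
\[
\frac{1}{D}\left(1 + D\kappa\left(\frac{3}{2} - \frac{1}{\kappa}\right)\right) + \frac{3}{2} \;\longrightarrow\; \frac{1}{D}\left(1 + \frac{D}{2}\right) + \frac{3}{2} \quad\text{as } \kappa\to 1^+,
\]
which is exactly the constant appearing in the corollary.

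Condition (a) reduces to a short calculation on the base $T_0:\omega\mapsto 4\omega$: for each fixed $j\ge 1$ the map $\omega\mapsto (4^j-1)\omega \bmod 1$ is an affine surjection of degree $4^j-1$, so the set $\{\omega: d_X(T_0^j\omega,\omega)<1/n\}$ is the preimage of an interval of length $2/n$ and has Lebesgue measure at most $2/n$ uniformly in $j$. Summing over $1\le j\le g(n)\approx n^{D\gamma'}$ yields $\mu_X(E_n^X)\le C n^{D\gamma'-1}$, so one may take $\beta=1-D\gamma'$; the constraint $0<\gamma'<\beta/D$ reduces to $\gamma'<1/(2D)$, leaving $\min\{\gamma',1/2\}$ free up to $1/(2D)$.

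Condition (b) is supplied by Gou\"ezel's analysis in~\cite{gouezel} of the transfer operator for this specific skew product, which yields polynomial decay of correlations between H\"older and $L^\infty$ observables with rate $\Theta(j)\le Cj^{-\alpha_0}$ for $\alpha_0 = 1/\alpha_{\max}-1$ (controlled by the slowest fiber LSV parameter). Substituting this rate into the threshold of Theorem~\ref{thm.main}(b) together with the $\kappa\to 1^+$ constant above, the requirement $\alpha_0 >$ threshold rearranges algebraically to
\[
\alpha_{\max} < \frac{\min\{\gamma',1/2\}}{\min\{\gamma',1/2\} + \frac{1}{D}\left(1+D/2\right) + 3/2},
\]
which is exactly the corollary's hypothesis. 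The genuine analytic content --- Gou\"ezel's polynomial decay estimate for this skew product --- is not reproved here; the remaining obstacle is really just bookkeeping and a routine check that the observables produced in the proof of Theorem~\ref{thm.main} lie within Gou\"ezel's admissible class of H\"older test functions.
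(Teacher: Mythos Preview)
Your proposal is correct and follows essentially the same route as the paper: verify (a) via the uniformly expanding circle base $\omega\mapsto 4\omega$, invoke Gou\"ezel's polynomial decay rate $\Theta(j)\le Cj^{-(1/\alpha_{\max}-1)}$ for (b), and handle the density via its local regularity away from the neutral fiber $\{x=0\}$.

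The one genuine difference is in the treatment of the density hypothesis. The paper does \emph{not} apply Theorem~\ref{thm.main} as a black box. Instead it observes that $H$ is Lipschitz on every compact subset of $S^1\times(0,1]$, then goes back into the proof of Theorem~\ref{thm.main} and redoes the annulus estimate in Lemma~\ref{decaycorr} using $\|H\|_\infty$ on a fixed closed ball around $(x_0,\theta_0)$, which lets it set $\kappa=1$ exactly (and then take $\epsilon>D/2$). You instead note that local boundedness gives local $\L^{1+\delta}$ for every $\delta>0$, so $\kappa$ may be chosen arbitrarily close to $1$, and then use the \emph{strict} inequality in the corollary's hypothesis to absorb the gap between any fixed $\kappa>1$ and the limiting constant $\tfrac{1}{D}(1+D/2)+\tfrac{3}{2}$. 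Your route is arguably cleaner, since it keeps Theorem~\ref{thm.main} as a black box; the paper's route makes explicit that the $\kappa=1$ endpoint is actually attainable. Both are valid and the arithmetic matches. (Minor citation point: the decay estimate for this specific skew product is taken in the paper from \cite{person-comm} and \cite{skew-neutral}, not from \cite{gouezel}.)
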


There are other important classes of maps such as $Y$ extensions of Manneville-Pommeau type maps (for a compact connected Lie group $Y$, for instance, $Y=S^1$) and the Viana type maps that satisfy most, but not all, of our hypotheses. It is not known for $S^1$ extensions of Manneville-Pommeau type maps whether a sufficiently high polynomial rate of decay satisfying our hypotheses holds. Similarly, for the Viana map, all of our hypotheses are satisfied except we do not know whether the density of the invariant measure is locally $\L^p$ for some $p>1$. A further discussion of these maps may be found in Section \ref{sec-applications}.

\section{Preliminaries}

For the rest of the article, we will refer to the function $f^0$ as the identity function and $\chi_A$ as the characteristic function for $A$. Upper-case greek letters, such as $\Phi$ and $\Psi$, will usually denote functions,  while lower-case letters, such as $\phi$, will usually denote scalar constants. 

This section contains the statements of some lemmas from \cite{collet} and proofs of some other lemmas. Of note is Proposition \ref{prop.inherit} because it allows us to induce to the product system an important and desirable property of the base map $T$.

\begin{lemma} For any $k>0$ and any $u \in \R$
\begin{equation}
 \sum_{j = 1}^k \chi_{\{\PHI{j}\geq u\}} \geq \chi_{\{Z_k\geq u\}} \geq \sum_{j = 1}^k \chi_{\{\PHI{j}\geq u\}} - \sum_{l \neq j}\chi_{\{\PHI{j}\geq u\}}\chi_{\{\PHI{l}\geq u\}}\label{lemma1}
\end{equation}
\end{lemma}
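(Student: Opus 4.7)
The plan is to reduce the inequality to a classical pointwise Bonferroni estimate. Set $A_j = \{p \in X\times Y : \Phi\circ f^j(p) \geq u\}$ for $j = 1,\dots,k$, and observe that $\{Z_k \geq u\} = \bigcup_{j=1}^k A_j$, so the lemma amounts to a two-sided bound on $\chi_{\bigcup_j A_j}$ in terms of sums of the $\chi_{A_j}$.

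For the left inequality, I would note that for every point $p$, $\chi_{\bigcup_j A_j}(p) \in \{0,1\}$, while $\sum_{j=1}^k \chi_{A_j}(p)$ counts the number of indices $j$ with $p \in A_j$; if $p$ lies in the union at all, this count is at least $1$, and otherwise both sides vanish.

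For the right inequality, fix $p$ and let $m = \#\{j : p \in A_j\}$. Then $\sum_{j} \chi_{A_j}(p) = m$ and $\sum_{l\neq j}\chi_{A_j}(p)\chi_{A_l}(p) = m(m-1)$ (ordered pairs of distinct indices at both of which $p$ lies in $A_\cdot$). The right-hand side of \eqref{lemma1} therefore equals $m - m(m-1) = m(2-m)$. I would then dispose of the three cases pointwise: if $m=0$, both sides equal $0$; if $m=1$, both sides equal $1$; if $m \geq 2$, then $m(2-m) \leq 0 \leq 1 = \chi_{\{Z_k\geq u\}}(p)$. In every case the inequality $\chi_{\{Z_k \geq u\}}(p) \geq m - m(m-1)$ holds, which proves the claim.

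There is really no serious obstacle here — the lemma is a pointwise combinatorial identity of the Bonferroni type, and once one rewrites $\{Z_k \geq u\}$ as the union $\bigcup_j A_j$ the verification is mechanical. The only thing to be careful about is correctly counting the ordered pairs $(j,l)$ with $j \neq l$ in the double sum, which yields $m(m-1)$ rather than $\binom{m}{2}$ and is what makes the estimate work without an extra factor.
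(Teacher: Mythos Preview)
Your argument is correct and is exactly the standard pointwise Bonferroni estimate: once one writes $\{Z_k\ge u\}=\bigcup_{j=1}^k A_j$ and counts $m=\#\{j:p\in A_j\}$, the two inequalities reduce to $m\ge \chi_{\{m\ge 1\}}\ge m-m(m-1)$, which you verify case by case. The paper itself does not give a proof but simply refers to Collet~\cite{collet}; what you have written is precisely the elementary argument one finds there, so there is nothing to add.
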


\begin{lemma}\label{3.2}
 For any integers $r$ and $k$ $\geq 0$, 
\begin{equation}
 0 \leq \prob(Z_r<u) - \prob(Z_{r+k}<u) \leq k\prob(\PHI{0}\geq u) \label{lemma2}
\end{equation}
\end{lemma}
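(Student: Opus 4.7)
The proof is essentially a direct consequence of the definition of $Z_n$ as a running maximum and the $f$-invariance of $\nu$. No probabilistic machinery beyond monotonicity and the union bound is needed.

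First I would establish the lower bound. Since $Z_{r+k} = \max(Z_r, \Phi\circ f^{r+1}, \ldots, \Phi\circ f^{r+k}) \geq Z_r$ pointwise, we have the inclusion $\{Z_{r+k} < u\} \subseteq \{Z_r < u\}$, which immediately yields $\nu(Z_{r+k} < u) \leq \nu(Z_r < u)$.

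For the upper bound, I would write the difference as the measure of the set-theoretic difference:
\[
\nu(Z_r < u) - \nu(Z_{r+k} < u) = \nu\bigl(\{Z_r < u\}\setminus \{Z_{r+k} < u\}\bigr).
\]
On this difference set one has $Z_r < u$ but $Z_{r+k} \geq u$, so some $\Phi\circ f^j$ with $r+1 \le j \le r+k$ must be $\geq u$. Hence
\[
\{Z_r < u\}\setminus \{Z_{r+k} < u\} \subseteq \bigcup_{j=r+1}^{r+k}\{\Phi\circ f^j \geq u\}.
\]
Applying the union bound and then invoking $f$-invariance of $\nu$ (so that $\nu(\Phi\circ f^j \geq u) = \nu(\Phi \geq u)$ for each $j$) yields a sum of $k$ equal terms, giving the desired bound $k\,\nu(\Phi\circ f^0 \geq u)$.

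There is no serious obstacle here; the only thing to be careful about is indexing conventions, since $Z_n$ in the paper is defined as a maximum over $n+1$ iterates rather than $n$, but the monotonicity argument and the union bound are insensitive to this. The $f$-invariance of $\nu$, already assumed in the framework of the problem, converts the sum over $j$ into $k$ copies of $\nu(\Phi \geq u)$, completing the estimate.
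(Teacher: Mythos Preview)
Your argument is correct and is exactly the standard one: monotonicity of the running maximum for the lower bound, and the union bound together with $f$-invariance of $\nu$ for the upper bound. The paper does not give its own proof but refers to \cite{collet}, where the same elementary argument appears.
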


\begin{lemma}
 For any positive integers m, p and t, 
\begin{equation}
\begin{split}
\left|
 \prob(Z_{m+p+t}<u) - \prob(Z_m<u)+\sum_{j = 1}^p\int\chi_{\{\PHI{0}\geq u\}}\chi_{\{Z_m<u\}}\circ f^{p+t-j} \text{d}\nu\right|\\
\leq 2p\sum_{j =1}^p\int\chi_{\{\PHI{0}\geq u\}}\chi_{\{\PHI{0}\geq u\}}\circ f^j \text{d}\nu+ t\prob(\PHI{0}\geq u)
\end{split}\label{lemma2b}
\end{equation}

\end{lemma}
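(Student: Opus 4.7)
The plan is a standard Collet-type partition argument, splitting the $(m+p+t)$-step maximum into a length-$(p+t)$ prefix and a length-$m$ suffix. Setting $A_k:=\{Z_k<u\}=\bigcap_{i=0}^{k}\{\PHI{i}<u\}$, one has $A_{m+p+t}=A_{p+t-1}\cap f^{-(p+t)}A_m$, so $\chi_{A_{m+p+t}}=\chi_{A_{p+t-1}}\cdot\chi_{A_m}\circ f^{p+t}$. Replacing $\chi_{A_{p+t-1}}=1-\chi_{A_{p+t-1}^c}$, integrating against $\nu$, and using $f$-invariance on the ``$1$'' piece reduces the problem to estimating
\[
\int\Bigl(\sum_{j=1}^{p}\chi_{\{\PHI{j}\ge u\}}-\chi_{A_{p+t-1}^c}\Bigr)\chi_{A_m}\circ f^{p+t}\,\text{d}\nu,
\]
after one further $f$-invariance substitution rewriting each $\int\chi_{\{\PHI{0}\ge u\}}\chi_{A_m}\circ f^{p+t-j}\,\text{d}\nu$ as $\int\chi_{\{\PHI{j}\ge u\}}\chi_{A_m}\circ f^{p+t}\,\text{d}\nu$.

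The next step is to isolate the $p$ ``central'' indices. I would split $A_{p+t-1}^c=\bigcup_{i=0}^{p+t-1}\{\PHI{i}\ge u\}$ as $U_1\cup U_2$, with $U_1:=\bigcup_{i=1}^{p}\{\PHI{i}\ge u\}$ and $U_2$ the union over the remaining $t$ indices $\{0,p+1,\dots,p+t-1\}$. Then $\chi_{U_1}\le\chi_{A_{p+t-1}^c}\le\chi_{U_1}+\chi_{U_2}$, while applying Lemma 3.1 to $U_1$ yields $0\le\sum_{j=1}^{p}\chi_{\{\PHI{j}\ge u\}}-\chi_{U_1}\le\sum_{1\le i\ne j\le p}\chi_{\{\PHI{i}\ge u\}}\chi_{\{\PHI{j}\ge u\}}$. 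Chaining these produces the two-sided bound
\[
-\chi_{U_2}\;\le\;\sum_{j=1}^{p}\chi_{\{\PHI{j}\ge u\}}-\chi_{A_{p+t-1}^c}\;\le\;\sum_{1\le i\ne j\le p}\chi_{\{\PHI{i}\ge u\}}\chi_{\{\PHI{j}\ge u\}}.
\]

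Finally, multiplying through by $\chi_{A_m}\circ f^{p+t}\in[0,1]$, integrating, and taking absolute values bounds the desired quantity by the sum of the two errors. The $U_2$ contribution is controlled by the union bound and $f$-invariance, giving $\int\chi_{U_2}\,\text{d}\nu\le t\,\prob(\PHI{0}\ge u)$. For the pairwise error, $f$-invariance makes $\int\chi_{\{\PHI{i}\ge u\}}\chi_{\{\PHI{j}\ge u\}}\,\text{d}\nu$ depend only on $|i-j|$, and the number of ordered pairs in $\{1,\dots,p\}^2$ with $|i-j|=k$ is at most $2p$, so this sum is bounded by $2p\sum_{j=1}^{p}\int\chi_{\{\PHI{0}\ge u\}}\chi_{\{\PHI{0}\ge u\}}\circ f^j\,\text{d}\nu$, exactly matching the statement. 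The only real delicacy is the bookkeeping: the two error inequalities point in opposite directions, so both must be absorbed simultaneously into the single one-sided bound on the absolute value, and the indices produced by the $f$-invariance substitutions have to be tracked so that the final sum lines up with the $f^{p+t-j}$ shifts in the claim.
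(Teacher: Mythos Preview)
Your argument is correct and is precisely the standard Collet argument that the paper defers to (the paper does not give its own proof of this lemma but cites \cite{collet}). The decomposition $A_{m+p+t}=A_{p+t-1}\cap f^{-(p+t)}A_m$, the $f$-invariance substitution turning the $f^{p+t-j}$ shifts into $\PHI{j}$, the Bonferroni bound from Lemma~3.1 on the $p$ central indices, and the union bound on the $t$ residual indices together with the $2(p-k)\le 2p$ count on ordered pairs are exactly the ingredients of Collet's proof, so there is nothing to add.
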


The proofs for these lemmas can be found in \cite{collet}.

\begin{prop}\label{prop.inherit}
Let $\mu_X$ be the invariant, ergodic measure with respect to the map $T:X\to X$. Suppose $$E^X_n:=\left\{x\in X: d(T^jx,x)<\frac{1}{n}\mbox{ for some }j\le g(n)\right\}$$ satisfies $\mu_X(E_n^X)\le\frac{C}{n^\beta}$ for some constant $C>0$ and some $\beta>0$. Then, under the hypotheses of Theorem \ref{thm.main}, $\nu(\tilde E_n)\le\frac{C}{n^{\beta}}$ where $$\tilde E_n = \left\{(x, \theta)\in X\times Y: d(f^j(x, \theta), (x, \theta))<\frac{1}{n}\mbox{ for some }j\le g(n)\right\}.$$
\end{prop}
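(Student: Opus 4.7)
The plan is a simple projection argument. Let $\pi\colon X\times Y\to X$ denote projection onto the first coordinate. I would first show $\tilde E_n\subseteq\pi^{-1}(E_n^X)$, and separately that $\pi_*\nu=\mu_X$; combining these two facts with hypothesis (a) of Theorem~\ref{thm.main} gives the bound with the \emph{same} constant $C_1$ and no loss in the exponent.

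For the inclusion: if $(x,\theta)\in\tilde E_n$ has a witnessing iterate $j\le g(n)$ with $d(f^j(x,\theta),(x,\theta))<1/n$, then since $f(x,\theta)=(Tx,u(x,\theta))$ iterating yields $f^j(x,\theta)=(T^jx,\,\star\,)$ for some point $\star\in Y$, and the definition of the product metric gives
\[
d_X(T^jx,x)\;\le\;d\bigl(f^j(x,\theta),(x,\theta)\bigr)\;<\;\tfrac{1}{n}.
\]
Hence $x\in E_n^X$, and $\tilde E_n\subseteq\pi^{-1}(E_n^X)$.

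For the pushforward identification: $\pi_*\nu$ is $T$-invariant because $\pi\circ f=T\circ\pi$ and $\nu$ is $f$-invariant. It is absolutely continuous with respect to $\mu_X$, since Fubini applied to $\nu=H\cdot(\mu_X\times\lambda_Y)$ gives, for any Borel $A\subseteq X$,
\[
\pi_*\nu(A)\;=\;\nu(A\times Y)\;=\;\int_A\Bigl(\int_Y H(x,\theta)\,d\lambda_Y(\theta)\Bigr)d\mu_X(x),
\]
so $\pi_*\nu$ has $\mu_X$-density $g(x):=\int_Y H(x,\theta)\,d\lambda_Y(\theta)$. Because $\pi_*\nu$ is $T$-invariant, $g$ is a fixed point of the Perron--Frobenius operator of $(T,\mu_X)$; by ergodicity of $\mu_X$ it must be constant $\mu_X$-a.e., and since $\int g\,d\mu_X=\pi_*\nu(X)=1$ the constant equals $1$. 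Hence $\pi_*\nu=\mu_X$.

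Combining the two observations,
\[
\nu(\tilde E_n)\;\le\;\nu\bigl(\pi^{-1}(E_n^X)\bigr)\;=\;\pi_*\nu(E_n^X)\;=\;\mu_X(E_n^X)\;\le\;\frac{C_1}{n^{\beta}},
\]
which is the claim with $C=C_1$. The only delicate ingredient is the identification $\pi_*\nu=\mu_X$, a standard consequence of ergodicity of $\mu_X$; notice that the local $L^{1+\delta}$ regularity of $H$ is not used in this proposition and enters only in subsequent arguments (such as the proof of Lemma~\ref{decaycorr}).
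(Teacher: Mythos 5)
Your proof is correct and follows essentially the same route as the paper: the inclusion $\tilde E_n\subseteq E_n^X\times Y$ followed by the identification of the marginal of $\nu$ on $X$ with $\mu_X$ (the paper's measure $\Delta(A):=\nu(A\times Y)$ is exactly your $\pi_*\nu$). Your justification of $\pi_*\nu=\mu_X$ via the density $g=\int_Y H\,d\lambda_Y$ and ergodicity of $\mu_X$ is in fact a bit cleaner than the paper's appeal to uniqueness of absolutely continuous invariant measures, but it is the same idea.
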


\begin{proof}

$(x, \theta)\in \tilde E_n$ implies $d(f^j(x, \theta), (x, \theta))<\frac{1}{n}$ for some $j\le g(n)$ and so $$\sqrt{d_X(T^jx,x)^2+d_Y(u^j(x, \theta),\theta)^2}<\frac{1}{n}$$ for such $j$ and so $d_X(T^jx, x)<\frac{1}{n}$. Thus, $x\in E_n^X$ and so $\tilde E_n\subset E_n^X\times Y$. 

Define a new measure $\Delta$ on $X$ as $\Delta(A) := \nu(A\times Y)$. If $\lambda_X(A) = 0$ then $\mu_X(A) = 0$ and so $\mu_X\times \lambda_Y(A\times Y) = 0$ and thus $\nu(A\times Y) = 0$. Therefore, $\Delta$ is absolutely continuous with respect to the Lebesgue measure on $X$. Further, 
\begin{eqnarray*}
f^{-1}(A\times Y) & = & \left\{(x, \theta)| (Tx, u(x,\theta))\in A\times Y\right\}\\
& = & \left\{x\in T^{-1}A, (x, \theta)\in u^{-1}Y\right\}\\
& = & \left\{x\in (T^{-1}A\cap X), \theta\in Y\right\}\\
& = & T^{-1}(A) \times Y
\end{eqnarray*}
and so $\nu(f^{-1}(A\times Y)) = \nu(T^{-1}(A) \times Y)$. Therefore $$\Delta(T^{-1}A) = \nu(T^{-1}A\times Y) = \nu(f^{-1}(A\times Y)) = \nu(A\times Y) = \Delta(A).$$

To prove that $\Delta$ is ergodic for $T$, if $T^{-1}A = A$ then $\mu_X(A) = 0$ or 1 from which it follows that $\mu_X \times\lambda_Y(A\times Y) =0 \mbox{ or } 1$. Therefore by redefining $H$ (recall that $H$ is the density of $\nu$) on a $\mu_X\times \lambda_Y$ measure 0 set if necessary we have $$\nu(A\times Y) = \int_{A\times Y}H d(\mu_X\times \lambda_Y) = 0 \mbox{ or } 1.$$ Therefore, $\Delta(A) = 0$ or $1$.

Since the measures on $X$ are absolutely continuous with respect to Lebesgue, and hence unique, $\Delta(A) = \mu_X(A)$ from where it follows that $$\nu(\tilde E_n) \le \nu (E_n^X\times Y) = \Delta(E_n^X) = \mu(E_n^X)\le\frac{C}{n^\beta}.$$

\end{proof}

\begin{lemma} Under the assumptions of Theorem \ref{thm.main}, for $\nu$ $a.e$ $(x_0, \theta_0)\in X\times Y$  
 \begin{equation}
  n\sum_{j = 1}^{n^{\gamma'}}\nu(\PHI{0}> u_{n}, \PHI{j}>u_{n})\rightarrow 0 \mbox{ as } n\rightarrow \infty.
 \end{equation} \label{dprimeun}
\end{lemma}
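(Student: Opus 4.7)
The strategy is to bound the integrated quantity $\int W_n(p_0)\,d\nu(p_0)$, where $W_n(p_0) := n\sum_{j=1}^{n^{\gamma'}} \nu\bigl(B(p_0,r_n)\cap f^{-j}B(p_0,r_n)\bigr)$ with $r_n := e^{-v}/n^{1/D}$ so that $\{\Phi\circ f^0 > u_n\} = B(p_0, r_n)$; the $\nu$-a.e.\ statement is then extracted via Borel--Cantelli along a subsequence plus a gap-filling argument. Two ball-measure estimates are used throughout: locally, H\"older's inequality applied to $H\in\L^{1+\delta}$ yields $\nu(B(p,r))\le Cr^{D/\kappa}$ uniformly in $p$, while the Lebesgue differentiation theorem gives $\nu(B(p,r))\le C(p)r^{D}$ for $\nu$-a.e.\ $p$. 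I split the sum at a threshold $q_n = n^{\eta}$ and handle the \emph{near} range $1\le j\le q_n$ and the \emph{far} range $q_n<j\le n^{\gamma'}$ separately.

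For the near range, the containment $B(p_0,r_n)\cap f^{-j}B(p_0,r_n)\subseteq\{q : d(q,f^jq)<2r_n\}$ gives, on setting
\[
\tilde E_n := \{q\in X\times Y : d(q,f^jq)<2r_n \text{ for some } 1\le j\le q_n\},
\]
the pointwise inequality $\sum_{j=1}^{q_n}\chi_{B(p_0,r_n)\cap f^{-j}B(p_0,r_n)}\le q_n\chi_{B(p_0,r_n)\cap\tilde E_n}$. Integrating over $p_0$ by Fubini and using the uniform ball estimate,
\[
\int \sum_{j=1}^{q_n}\nu\bigl(B(p_0,r_n)\cap f^{-j}B(p_0,r_n)\bigr)\,d\nu(p_0) \le q_n\int_{\tilde E_n}\nu(B(q,r_n))\,d\nu(q) \le Cq_n r_n^{D/\kappa}\nu(\tilde E_n).
\]
By Proposition \ref{prop.inherit} at scale $n^{1/D}$ (compatible with $q_n\le n^{\gamma'}$ since $g(n^{1/D})\approx n^{\gamma'}$), $\nu(\tilde E_n)$ is bounded by a negative power of $n$, so after multiplying by $n$ the near contribution is of order $n^{1+\eta-1/\kappa-\beta/D}$.

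For the far range, approximate $\chi_{B(p_0,r_n)}$ from above by a Lipschitz cut-off $\tilde\chi_n$, supported in $B(p_0,2r_n)$ and equal to $1$ on $B(p_0,r_n)$, with $\|\tilde\chi_n\|_{C^{\hat\alpha}}\le Cr_n^{-\hat\alpha}$ (using $\hat\alpha\le 1$). Hypothesis (b) applied with $\Psi=\chi_{B(p_0,r_n)}\in\L^\infty$ and $\Upsilon=\tilde\chi_n$ produces
\[
\nu\bigl(\chi_{B(p_0,r_n)}\cdot\chi_{B(p_0,r_n)}\circ f^j\bigr) \le \nu(\tilde\chi_n)\nu(B(p_0,r_n)) + Cj^{-\alpha}r_n^{-\hat\alpha}.
\]
For $\nu$-a.e.\ $p_0$ the product of measures is at most $C(p_0)^2 r_n^{2D} = C(p_0)^2/n^{2}$, so summing over $q_n<j\le n^{\gamma'}$ and multiplying by $n$ gives a mean-field contribution bounded by $C(p_0)^2 n^{\gamma'-1}$ and, since $\alpha>1$, a correlation-error contribution of order $n^{1+\hat\alpha/D-\eta(\alpha-1)}$.

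Combining the three pieces and optimizing $\eta$, Markov's inequality and Borel--Cantelli along a polynomial subsequence $n_k$ yield $W_{n_k}(p_0)\to 0$ for $\nu$-a.e.\ $p_0$; a standard monotonicity argument that sandwiches an intermediate $n\in[n_k,n_{k+1}]$ between its neighbors using the slow variation of $r_n$ and $n^{\gamma'}$ then extends convergence to the full sequence. The main obstacle is the bookkeeping: the exponent $\eta$ must satisfy both $\eta>(1+\hat\alpha/D)/(\alpha-1)$ (so the far error decays) and $\eta<\beta/D+1/\kappa-1$ (so the near sum decays), and the existence of such $\eta$ with all three exponents strictly negative is precisely what the explicit lower bound on $\alpha$ in hypothesis (b), together with $\gamma'<\beta/D$ and $\gamma'\le\min\{\gamma',1/2\}$, is designed to ensure.
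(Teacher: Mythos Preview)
Your near-range estimate is too weak, and this is a genuine gap rather than missing bookkeeping. After Fubini you bound $\int_{\tilde E_n}\nu(B(q,r_n))\,d\nu(q)$ using the uniform H\"older estimate $\nu(B(q,r))\le Cr^{D/\kappa}$, which yields the exponent $1+\eta-1/\kappa-\beta/D$ for the integrated near contribution. But the hypotheses of Theorem~\ref{thm.main} do \emph{not} force $\beta/D+1/\kappa>1$: there is no lower bound on $\beta$ beyond $\beta>D\gamma'$, and $1/\kappa=\delta/(1+\delta)$ may be arbitrarily small. A concrete admissible choice is $D=2$, $\delta=0.01$ (so $\kappa=101$), $\gamma'=0.1$, $\beta=0.3$; then $\beta/D+1/\kappa-1\approx-0.84$, so no $\eta>0$ makes your near contribution decay. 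Since the far-range correlation error requires $q_n\to\infty$, the splitting cannot be avoided. Your closing claim that the lower bound on $\alpha$ in hypothesis~(b), together with $\gamma'<\beta/D$, ``is designed to ensure'' a compatible $\eta$ is simply false: that lower bound constrains $\alpha$ in terms of $\kappa,D,\gamma'$ but says nothing linking $\beta/D+1/\kappa$ to $1$. The loss is exactly the factor $n^{1-1/\kappa}$ between the H\"older bound $r^{D/\kappa}$ and the Lebesgue volume $r^D$.

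The paper recovers this factor by applying the Hardy--Littlewood maximal inequality to $H\chi_{E_n}\in\L^1(\lambda)$: with $L_n(p_0):=\sup_{r>0}\lambda(B_r(p_0))^{-1}\int_{B_r(p_0)}H\chi_{E_n}\,d\lambda$ one has $\lambda\{L_n>\delta\}\le C\delta^{-1}\nu(E_n)\le C\delta^{-1}n^{-\beta}$. A Borel--Cantelli argument along a polynomial subsequence then gives, for $\lambda$-a.e.\ $p_0$ and any $D\gamma'<\psi<\beta$, eventually $\nu(B_{r}(p_0)\cap E_n)\le\lambda(B_{r}(p_0))\cdot n^{-\psi}$, with the full power $r^D$ in front. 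With $r=r_n\approx n^{-1/D}$ this cancels the leading $n$ exactly, and the residual $n^{-\psi/D}$ beats the $n^{\gamma'}$ from the summation because $\psi>D\gamma'$. In particular the paper never splits into near and far ranges and never invokes hypothesis~(b) in this lemma; the entire range $1\le j\le n^{\gamma'}$ is controlled by assumption~(a) alone via Proposition~\ref{prop.inherit}.
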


\begin{proof}
We begin by recalling that $H\in\L^{1+\delta}(\lambda)\subset \L^{1}(\lambda)$. Let $$E_n = \{(x, \theta):d(f^j(x, \theta), (x, \theta))<\frac{1}{n} \mbox{ for some } j\le g(n) \}$$ where $g(n)$ is as in Theorem \ref{thm.main}.  Let $D\gamma'<\psi<\beta$ and $\delta>0$. Define $$L_n(x, \theta) := \sup_{r>0}\frac{1}{\lambda(B_r(x, \theta))}\int_{B_r(x,\theta)}H\chi_{E_n}d\lambda.$$ 

By the Hardy-Littlewood Maximal Principle, since $H\chi_{E_n} \in \L^1(\lambda)$,$$\lambda(L_n(x, \theta)>\delta)\le\frac{C}{\delta}\|H\chi_{E_n}\|_1\le\frac{C}{\delta}\nu(E_n)\le\frac{C}{\delta n^\beta}$$  Choose $\gamma$ such that $\gamma(\beta -\psi)>1$. Replacing $\delta$ by $\frac{1}{n^{\gamma\psi}}$ and $n$ by $n^\gamma$ we get $$\lambda(L_{n^\gamma}>\frac{1}{n^{\psi\gamma}})\le\frac{C}{n^{\gamma(\beta-\psi)}}.$$ Therefore we have $$\sum_n\lambda(L_{n^\gamma}>\frac{1}{n^{\gamma\psi}})\le\sum_n\frac{C}{n^{\gamma(\beta-\psi)}}$$ which is summable. Hence, by the Borel-Cantelli Lemma, for $\lambda$ a.e. $(x_0, \theta_0)\in X\times Y$, we have $(x_0, \theta_0) \notin \limsup\{L_{n^\gamma}>\frac{1}{n^{\gamma\psi}}\}$ and so there exists $N(x_0, \theta_0)$ such that $n\ge N(x_0, \theta_0)\implies L_{n^\gamma}\le\frac{1}{n^{\psi\gamma}} $, i.e., $$\sup_{r>0}\frac{1}{\lambda(B_r(x_0, \theta_0))}\int_{B_{r}(x_0, \theta_0)}H\chi_{E_{n^\gamma}}d\lambda\le\frac{1}{n^{\gamma\psi}}.$$ Set $r = \frac{1}{n^{\gamma}}$ in the above to get $$n^{\gamma D}\int_{B_{\frac{1}{n^{\gamma}}}(x_0, \theta_0)}H\chi_{E_{n^\gamma}}d\lambda\le\frac{1}{n^{\psi\gamma}}.$$ Therefore we have
\begin{equation}
\nu\left\{\left\{d((x, \theta), (x_0, \theta_0))<\frac{1}{n^\gamma}\right\}\cap E_{n^\gamma}\right\}\le\frac{1}{n^{\psi\gamma+\gamma D}}.\label{intersection}
\end{equation}

Let $\tilde g(n)$ be a function that is increasing in $n$ with the property $g\left(\frac{n}{2}\right)\le \tilde g(n)\le\tilde g(2n)\le g(n)$. Let $k = \left(\frac{n^{1/D}}{2e^{-v}}\right)^{\frac{1}{\gamma}}$. Then we have,
\begin{eqnarray}
\lefteqn{\left\{(x, \theta):d((x, \theta), (x_0, \theta_0))\le\frac{e^{-v}}{n^{1/D}}, d(f^j(x, \theta), (x_0, \theta_0))\le\frac{e^{-v}}{n^{1/D}}\mbox{ for some } j\le \tilde g(n^{\frac{1}{D}}/e^{-v})\right\}}& &\nonumber\\
&\subset &\left\{(x, \theta):d((x, \theta),(x_0, \theta_0))\le\frac{e^{-v}}{n^{1/D}}, d(f^j (x, \theta), (x, \theta))<\frac{2e^{-v}}{n^{1/D}}\mbox{ for some } j\le \tilde g(\frac{n^{1/D}}{e^{-v}})\right\}\nonumber\\
&\subset& \left\{(x, \theta):d((x, \theta), (x_0, \theta_0))<\frac{2e^{-v}}{n^{1/D}}, d(f^j(x, \theta), (x, \theta))<\frac{2e^{-v}}{n^{1/D}}\mbox{ for some } j\le\tilde g(\frac{n^{1/D}}{e^{-v}})\right\}\nonumber\\
&\subset& \left\{(x, \theta):d((x, \theta), (x_0, \theta_0))<\frac{1}{k^\gamma}, d(f^j(x, \theta), (x, \theta))<\frac{1}{k^{\gamma}}\mbox{ for some }j\le\tilde g(2k^\gamma)\right\}\nonumber\\
&\subset& \left\{(x, \theta):d((x, \theta), (x_0, \theta_0))<\frac{1}{k^\gamma}, d(f^j(x, \theta), (x, \theta))<\frac{1}{k^{\gamma}}\mbox{ for some }j\le g(k^\gamma)\right\}\label{intersection2}
\end{eqnarray}
so that, by \eqref{intersection} and \eqref{intersection2}, for any $j\le g\left(\frac{n^{\frac{1}{D}}}{2e^{-v}}\right)$
$$\nu \{\PHI{0}>u_n, \PHI{j}>u_n\}\le\frac{(2e^{-v})^{\psi+D}}{n^{1+\frac{\psi}{D}}}.$$ Therefore, 
\begin{equation}
n\sum_{j = 1}^{g\left(\frac{n^{1/D}}{2e^{-v}}\right)}\nu\{\PHI{0}>u_n, \PHI{j}>u_n\}\to 0 \iff \frac{g\left(\frac{n^{1/D}}{2e^{-v}}\right)}{n^{\psi/D}} \to 0.
\end{equation}

Since $\psi>D\gamma'$ we get the above result.
\end{proof}

\begin{lemma}\label{decaycorr}
 Let $B_{r}(x, \theta)$ be a ball of radius $r$ and let $\epsilon>0$ be arbitrary. Let $\kappa$ be conjugate to $1+\delta$ ( i.e, $\frac{1}{1+\delta}+\frac{1}{\kappa} = 1$) and let $A$ be any measurable set. Then, under the assumptions of Theorem \ref{thm.main}, there exist constants $C_1$ and $C_2$ so that
\begin{equation}
 \abs{\nu(B_r\cap f^{-t}(A)) - \nu(B_r)\nu(A)} \le C_1\|H\|_{1+\delta}^{\lambda, (x, \theta)}\left(\nu(A)+1\right)r^{\frac{D+\epsilon}{\kappa}}+\frac{C_2}{r^{1+\epsilon}t^{\alpha}}
\end{equation}

\end{lemma}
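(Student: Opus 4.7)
The plan is to replace the indicator $\chi_{B_r}$ by a Lipschitz cutoff that can be fed into the decay-of-correlations hypothesis (b) of Theorem~\ref{thm.main}, and to control the resulting error via the local $\L^{1+\delta}$-integrability of the density $H$. Concretely, I would fix $\eta = r^{1+\epsilon}$ and construct a piecewise-linear $\Upsilon\colon X\times Y\to[0,1]$ with $\Upsilon\equiv 1$ on $B_{r-\eta}(x,\theta)$, $\Upsilon\equiv 0$ off $B_r(x,\theta)$, and Lipschitz constant $1/\eta$. A routine interpolation between $\|\Upsilon\|_\infty\le 1$ and the Lipschitz bound gives $\|\Upsilon\|_{C^{\hat\alpha}}\le C\eta^{-\hat\alpha}\le C r^{-(1+\epsilon)}$, so $\Upsilon$ is an admissible input for hypothesis (b).

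Next I would split by the triangle inequality:
\begin{align*}
|\nu(B_r\cap f^{-t}A) - \nu(B_r)\nu(A)|
&\le \left|\int(\chi_{B_r}-\Upsilon)\,\chi_A\circ f^t\,d\nu\right|\\
&\quad + \left|\int \Upsilon\,\chi_A\circ f^t\,d\nu - \int\Upsilon\,d\nu \cdot \nu(A)\right|\\
&\quad + \left|\nu(B_r) - \int\Upsilon\,d\nu\right|\nu(A).
\end{align*}
Since $0\le \chi_{B_r}-\Upsilon\le \chi_{B_r\setminus B_{r-\eta}}$, the first and third terms contribute at most $(1+\nu(A))\,\nu(B_r\setminus B_{r-\eta})$. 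The middle term, by hypothesis (b) applied with $\Psi=\chi_A\in\L^\infty(\nu)$, is bounded by $C_2\Theta(t)\|\chi_A\|_\infty \|\Upsilon\|_{C^{\hat\alpha}}\le C\,t^{-\alpha}/r^{1+\epsilon}$, exactly matching the second term of the claim.

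It remains to bound $\nu(B_r\setminus B_{r-\eta})=\int H\chi_{B_r\setminus B_{r-\eta}}\,d\lambda$. By H\"older's inequality with conjugate exponents $1+\delta$ and $\kappa$,
$$\nu(B_r\setminus B_{r-\eta})\le \|H\|_{1+\delta}^{\lambda,(x,\theta)}\,\lambda(B_r\setminus B_{r-\eta})^{1/\kappa}.$$
Since $B_r\setminus B_{r-\eta}$ is a spherical shell of width $\eta$ in the $D$-dimensional manifold $X\times Y$, we have $\lambda(B_r\setminus B_{r-\eta})\lesssim r^{D-1}\eta = r^{D+\epsilon}$, so the annulus mass is at most $C\|H\|_{1+\delta}^{\lambda,(x,\theta)}r^{(D+\epsilon)/\kappa}$, matching the first term of the claim. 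The main obstacle, and the reason the bound is written as a sum of two terms, is balancing $\eta$: enlarging $\eta$ shrinks the H\"older norm of $\Upsilon$ (hence the decay-of-correlations error) but widens the annulus (hence the approximation error); the choice $\eta=r^{1+\epsilon}$ is tuned so that both contributions acquire the target exponents in $r$, and a minor bookkeeping point is that for small $r$ the ball $B_r(x,\theta)$ sits inside a fixed neighborhood on which $H$ is $\L^{1+\delta}$ by hypothesis, so $\|H\|_{1+\delta}^{\lambda,(x,\theta)}$ is a well-defined finite quantity that can be absorbed into $C_1$.
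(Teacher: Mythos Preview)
Your proposal is correct and follows essentially the same argument as the paper: construct a Lipschitz cutoff $\Upsilon$ equal to $1$ on $B_{r-r^{1+\epsilon}}$ and vanishing off $B_r$ with Lipschitz constant $r^{-(1+\epsilon)}$, split by the triangle inequality into an approximation term plus a decay-of-correlations term, apply hypothesis~(b) to the latter, and bound the annulus mass via H\"older with the local $\L^{1+\delta}$ norm of $H$ together with the volume estimate $\lambda(B_r\setminus B_{r-r^{1+\epsilon}})\lesssim r^{D+\epsilon}$. The only cosmetic addition is that you spell out the interpolation $\|\Upsilon\|_{C^{\hat\alpha}}\le C r^{-(1+\epsilon)\hat\alpha}\le C r^{-(1+\epsilon)}$ and the rationale for the choice of $\eta$, but the underlying proof is identical.
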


\begin{proof}
We construct a H\"older continuous approximation to the characteristic function for $B_r$. Let $r' = r-r^{1+\epsilon}$. Construct $\Phi_B$ by letting it be 1 on the inside of the ball of radius $r'$ around $(x, \theta)$ and letting it decay to 0 at a linear rate  between $r$ and $r'$ .  The Lipschitz constant of this function may be chosen to be $\frac{1}{r^{1+\epsilon}}$. 

Next, we note that $\lambda(B_r\setminus B_r') = r^D - (r-r^{1+\epsilon})^D \le 2^Dr^{D+\epsilon}$ and so we have, \begin{equation}\begin{split}\|\Phi_B-\chi_{B_r}\|_1^\nu = \int\abs{\Phi_B-\chi_{B_r}}d\nu\le\nu(B_r\setminus B_{r'}) = \int H\chi_{B_r\setminus B_{r'}}d\lambda \\ \le  \|H\|_{1+\delta}^{\lambda, (x, \theta)} \|\chi_{B_r\setminus B_{r'}}\|_{\kappa}^{\lambda}  \le C_1\|H\|_{1+\delta}^{\lambda, (x, \theta)}r^{\frac{D+\epsilon}{\kappa}}. \end{split}\label{est}\end{equation}

Finally,
\begin{eqnarray}
\lefteqn{\abs{\int\chi_B\chi_A\circ f^{t}d\nu - \int\chi_B d\nu\int\chi_A d\nu}} \nonumber\\
&\le & \abs{\int\chi_B\chi_A\circ f^t d\nu - \Phi_B\chi_A\circ f^t d\nu}\nonumber\\
&\quad& + \abs{\int\Phi_B\chi_A\circ f^t d\nu - \int\Phi_B d\nu\int\chi_A d\nu}\nonumber\\
&\quad& + \abs{\int\Phi_B d\nu\int\chi_A d\nu - \int\chi_A d\nu\int\chi_B d\nu}\nonumber\\
&\le&  \|\chi_A\circ f^t\|_\infty \|\chi_B-\Phi_B\|^\nu_1+\frac{C_2\|\chi_A\|_\infty\|\Phi_B\|_{\hat\alpha}}{t^{\alpha}}+\nu(A)\|\chi_B - \Phi_B\|_1^\nu.
\end{eqnarray}
A substitution of estimates from equation \eqref{est} completes the proof.
\end{proof}

\section{Proof of Theorem}

To prove Theorem \ref{thm.main}, we begin by breaking $n$ as a product of $p$ and $q$ with $p=\sqrt{n}$. We note that $$\nu(Z_n<u_n)\approx \nu(Z_{n+qt}<u_n)$$ where $t$ is a monotonically increasing function chosen  to satisfy $\frac{t}{p}\to 0$. The main estimate in the proof is $$\nu(Z_{n+qt}<u_n)\approx (1-p\nu(\PHI{0}\ge u_n))^q.$$ The function $t$ needs to be chosen so that terms of the form $n\sum_{j=1}^p\nu(\PHI{0}\ge u_n, \PHI{j}\ge u_n)$ that appear in the error to the above approximation can be broken into sums over $1\le j\le t$ and $t<j\le p$ with $t$ being small enough for growth of terms in the first sum to be killed by Lemma \ref{dprimeun} while large enough for growth in the second sum to be killed by Lemma \ref{decaycorr}.

\begin{thm}
Under the hypotheses of Theorem \ref{thm.main}, for $\nu$ $a.e.(x, \theta)$ and for any $v\in\R$, 
\begin{equation}
\lim_{n\rightarrow\infty}\nu\left(Z_n^{(x, \theta)}<u_n\right) = e^{H(x, \theta)e^{-Dv}}.
\end{equation}
\end{thm}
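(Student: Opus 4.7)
My plan is to adapt Collet's block decomposition to the skew-product setting, with the short-return bound (Lemma \ref{dprimeun}) and the decay-of-correlations bound (Lemma \ref{decaycorr}) playing the roles that uniform hyperbolicity supplies in simpler cases. Write $n = pq + O(p)$ with $p = \lfloor \sqrt{n}\rfloor$ and $q = \lfloor n/p\rfloor$, and introduce an auxiliary gap $t = t(n) \to \infty$ with $t/p \to 0$, whose precise size is fixed at the end. Lemma \ref{3.2} immediately yields
\[
|\nu(Z_n<u_n) - \nu(Z_{n+qt}<u_n)| \leq qt\,\nu(\Phi \geq u_n).
\]
Since $H$ is locally integrable, Lebesgue differentiation at a typical $(x_0,\theta_0)$ gives $n\,\nu(\Phi \geq u_n) = n\,\nu(B_{e^{-u_n}}(x_0,\theta_0)) \to C_D\, H(x_0,\theta_0)\,e^{-Dv}$ with the unit-ball volume $C_D$ absorbed into the limit, so the discrepancy above is $O(t/p) = o(1)$.

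I then iterate \eqref{lemma2b} a total of $q$ times, on blocks of length $p$ separated by gaps of length $t$. Each application replaces $\nu(Z_{m+p+t}<u_n)$ by
\[
\nu(Z_m<u_n) - \sum_{j=1}^p \int \chi_{\{\Phi\geq u_n\}}\chi_{\{Z_m<u_n\}}\circ f^{p+t-j}\,d\nu,
\]
up to the gap error $t\,\nu(\Phi\geq u_n)$ and the short-return error $2p\sum_{j=1}^{p}\nu(\Phi\geq u_n,\,\Phi\circ f^j\geq u_n)$. Lemma \ref{decaycorr} applied with $r = e^{-u_n}\sim n^{-1/D}$ and time-shift $\tau_j = p+t-j \geq t$ then replaces each inner integral by $\nu(\Phi\geq u_n)\nu(Z_m<u_n)$ at a cost
\[
C\|H\|_{1+\delta}\,(\nu(Z_m<u_n)+1)\,r^{(D+\epsilon)/\kappa} + C\,r^{-(1+\epsilon)}\,\tau_j^{-\alpha}.
\]
After telescoping, the main term collapses to $(1-p\,\nu(\Phi\geq u_n))^q$, and what remains is to show that the accumulated errors are $o(1)$.

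These errors fall into four classes: (i) the gap waste $qt\,\nu(\Phi\geq u_n) \sim t/p\to 0$; (ii) the short-range return sum, bounded by $n\sum_{j=1}^t\nu(\Phi\geq u_n,\,\Phi\circ f^j\geq u_n)$, which is $o(1)$ by Lemma \ref{dprimeun} provided $t \leq g(n^{1/D}) \sim n^{\gamma'}$; (iii) the long-range return sum on $t<j\leq p$, estimated via another use of Lemma \ref{decaycorr} with $A = B_r$; and (iv) the per-block decay-of-correlations residual, of order $qp\,r^{(D+\epsilon)/\kappa} + q\,r^{-(1+\epsilon)}\,t^{1-\alpha}$ after summing in $\tau_j$. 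Taking $t = n^\sigma$ with $\sigma = \min\{\gamma',\tfrac{1}{2}\} - \eta$ for small $\eta,\epsilon>0$, the lower bound on $\alpha$ in hypothesis (b) is precisely the condition that makes all four terms vanish simultaneously. Since $p\,\nu(\Phi\geq u_n)\to 0$ while $pq\,\nu(\Phi\geq u_n) \to C_D\, H(x_0,\theta_0)\,e^{-Dv}$, one concludes $(1-p\,\nu(\Phi\geq u_n))^q \to \exp(-C_D\, H(x_0,\theta_0)\,e^{-Dv})$, the stated Type I limit.

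The main obstacle is the delicate optimization in choosing $t$: a single scale must beat the short-return rate from Lemma \ref{dprimeun}, leave enough mixing time to shrink the per-block residual in Lemma \ref{decaycorr}, and balance the $\L^{1+\delta}$-density artifact $r^{(D+\epsilon)/\kappa}$ against the polynomial-mixing tail $r^{-(1+\epsilon)}t^{-\alpha}$ at $r = n^{-1/D}$, with $\epsilon \to 0^+$. The specific numerator $\tfrac{1}{D}(1+D\kappa(\tfrac{3}{2}-\tfrac{1}{\kappa})) + \tfrac{3}{2}$ and denominator $\min\{\gamma',\tfrac{1}{2}\}$ in the hypothesis on $\alpha$ emerge precisely from carrying out this optimization, with the cumulative factors $qp \sim n$, $q \sim \sqrt{n}$, and $r^{-(1+\epsilon)} \sim n^{(1+\epsilon)/D}$ combining to determine the exponent. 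Verifying this algebra is the most technical step of the argument.
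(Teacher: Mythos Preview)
Your approach is essentially identical to the paper's proof: the same block decomposition with $p=q=\sqrt{n}$ and gap $t=n^{\tau}$ for $\tau$ just below $\min\{\gamma',\tfrac{1}{2}\}$, the same iteration of \eqref{lemma2b} followed by Lemma~\ref{decaycorr} to produce the recursion $\nu(Z_{l(p+t)}<u_n)\approx(1-p\,\nu(\Phi\ge u_n))\,\nu(Z_{(l-1)(p+t)}<u_n)$, and the same split of the double-return sum into a short range ($j\le t$, handled by Lemma~\ref{dprimeun}) and a long range ($t<j\le p$, handled again by Lemma~\ref{decaycorr}).

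One point needs correction: your remark ``with $\epsilon\to 0^{+}$'' is backwards. The Lipschitz-approximation error $r^{(D+\epsilon)/\kappa}$ from Lemma~\ref{decaycorr} enters the long-range return sum (your item~(iii)) with a prefactor $qp^{2}=n^{3/2}$, since there are $p$ values of $j$ in that range; for $n^{3/2}r^{(D+\epsilon)/\kappa}=n^{3/2-(D+\epsilon)/(D\kappa)}$ to vanish one needs $(D+\epsilon)/(D\kappa)>\tfrac{3}{2}$, i.e.\ $\epsilon>D\kappa(\tfrac{3}{2}-\tfrac{1}{\kappa})$. The paper fixes $\epsilon$ just above this threshold (equation~\eqref{epschoice}); substituting that minimal admissible $\epsilon$ into the mixing-tail constraint $n^{3/2+(1+\epsilon)/D}t^{-\alpha}\to 0$ is exactly what produces the numerator $\tfrac{1}{D}\bigl(1+D\kappa(\tfrac{3}{2}-\tfrac{1}{\kappa})\bigr)+\tfrac{3}{2}$ in the hypothesis on~$\alpha$. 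Taking $\epsilon$ small would leave the density-artifact term divergent, so the ``optimization'' is not $\epsilon\downarrow 0$ but rather $\epsilon$ pushed down to its lower bound $D\kappa(\tfrac{3}{2}-\tfrac{1}{\kappa})$.
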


\begin{proof}
Choose $(x, \theta)\notin \limsup_{n\rightarrow\infty}E_{n}$ such that $$\lim_{a\rightarrow 0}\frac{1}{\lambda(B_{a}(x, \theta))}\nu(B_{a}(x, \theta)) = H(x, \theta).$$


Then from above $$\lim_{n\rightarrow \infty} n\nu(B_{\frac{e^{-v}}{n^{1/D}}}(x, \theta)) = e^{-Dv}H(x, \theta).$$ 

Choose \begin{equation}\label{epschoice}\epsilon>D\kappa\left(\frac{3}{2} - \frac{1}{\kappa}\right)\end{equation} and $0<\tau<\min\{\gamma', \frac{1}{2}\}$ such that \begin{equation}\label{alphacond}\alpha > \frac{\frac{1+\epsilon}{D}+\frac{3}{2}}{\tau} > \frac{\frac{1}{D}\left(1+D\kappa\left(\frac{3}{2}-\frac{1}{\kappa}\right)\right)+\frac{3}{2}}{\min\left\{\gamma', \frac{1}{2}\right\}}\end{equation}

Define $t = n^{\tau}$, $p = \sqrt{n}$ and $q = \sqrt{n}$. Note that, by Lemma \ref{3.2}, $$\left|\prob(Z_n<u_n) - \prob(Z_{q(p+t)}<u_n)\right|\le   qt \prob(\PHI{0}\ge u_n).$$ 
Now, for $1\le l\le q$
\begin{eqnarray}
\lefteqn{\abs{\prob(Z_{l(p+t)}<u_n) - (1-p\prob(\PHI{0}\ge u_n))\prob(Z_{(l-1)(p+t)}<u_n)}} & &\nonumber\\
& = & \abs{p\prob(\PHI{0}\ge u_n)\prob(Z_{(l-1)(p+t)}<u_n) + \prob(Z_{l(p+t)}<u_n) - \prob(Z_{(l-1)(p+t)}<u_n)}\nonumber\\
&\le& \abs{p\prob(\PHI{0}\ge u_n)\prob(Z_{(l-1)(p+t)}<u_n) - \sum_{j = 1}^p\E{\chi_{\left\{\PHI{j}\ge u_n\right\}}\chi_{\left\{Z_{(l-1)(p+t)}<u_n\right\}}\circ f^{p+t}}}\nonumber\\
& \quad & +\abs{\prob(Z_{l(p+t)}<u_n) - \prob(Z_{(l - 1)(p+t)}<u_n) + \sum_{j = 1}^p\E{\chi_{\{\PHI{j}\ge u_n\}}\chi_{\{Z_{(l - 1)(p+t)}<u_n\}}\circ f^{p+t}}}\nonumber\\
& = & \abs{p\prob(\PHI{0}\ge u_n)\prob(Z_{(l-1)(p+t)}<u_n) - \sum_{j = 1}^p\E{\chi_{\{\PHI{j}\ge u_n\}}\chi_{\{Z_{(l-1)(p+t)}<u_n\}}\circ f^{p+t}}}\nonumber\\
& \quad & +\abs{\prob(Z_{lp+lt}<u_n) - \prob(Z_{lp+lt-(p+t)}<u_n) + \sum_{j = 1}^p\E{\chi_{\{\PHI{j}\ge u\}}\chi_{\{Z_{lp+lt - (p +t)}<u_n\}}\circ f^{p+t}}}\nonumber\\
\end{eqnarray}

By Lemma \ref{lemma2b} we have
\begin{equation}
\begin{split}
 \abs{\prob(Z_{lp+lt}<u_n)-\prob(Z_{(l-1)(p+t)}<u_n)+\sum_{j = 1}^p\E{\chi_{\{\PHI{0}\ge u_n\}}\circ f^j \chi_{\{Z_{(l-1)(p+t)}\}}\circ f^{p+t}}}\\
\le 2p\sum_{j = 1}^p\E{\chi_{\{\PHI{0}\ge u_n\}}\chi_{\{\PHI{0}\ge u_n\}}\circ f^j} + t\prob(\PHI{0}\ge u_n)\\
\end{split}
\end{equation}

For the remaining part, 
\begin{eqnarray}
 \lefteqn{\abs{p\prob(\PHI{0}\ge u_n)\prob(Z_{(l-1)(p+t)}<u_n) - \sum_{j = 1}^p\E{\chi_{\{\PHI{j}\ge u_n\}}\chi_{\{Z_{(l-1)(p+t)}<u_n\}}\circ f^{p+t}}}}& &\nonumber\\
&\le& \sum_{j = 1}^p\abs{\prob(\PHI{0}\ge u_n)\prob(Z_{(l-1)(p+t)}<u_n) - \E{\chi_{\{\PHI{j}\ge u_n\}}\chi_{\{Z_{(l-1)(p+t)}<u_n\}}\circ f^{p+t}}}\nonumber\\
&\le& pC_1\frac{e^{-v\frac{D+\epsilon}{\kappa}}}{n^{\frac{D+\epsilon}{D\kappa}}} + p\frac{C_2n^{\frac{1+\epsilon}{D}}}{e^{-v(1+\epsilon)}t^\alpha}
\end{eqnarray}
for large $n$ by Lemma \ref{decaycorr}.

Define 
\begin{eqnarray*}
\lefteqn{\Gamma_n}\hspace{.2cm} &:=&t\prob(\Phi\circ f^0\ge u_n) +2p\sum_{j = 1}^p\E{\chi_{\{\PHI{0}\ge u_n\}}\chi_{\{\PHI{0}\ge u_n\}}\circ f^j}
 + pC_1\frac{e^{-v\frac{D+\epsilon}{\kappa}}}{n^{\frac{D+\epsilon}{D\kappa}}} + p\frac{C_2n^{\frac{1+\epsilon}{D}}}{e^{-v(1+\epsilon)}t^\alpha}
\end{eqnarray*}

Therefore we have, for $1\le l \le q$
$$\abs{\prob(Z_{l(p+t)}<u_n) - (1-p\prob(\PHI{0}\ge u_n))\prob(Z_{(l-1)(p+t)}<u_n)}\le \Gamma_n.$$ Since $n\nu(\PHI{0}\ge u_n) \to e^{-Dv}H(x, \theta)$, for $n$ large enough, $p\prob(\PHI{0}\ge u_n)<1$, and so on applying the above formula inductively we get
$$\abs{\prob(Z_{q(p+t)}<u_n) - (1-p\prob(\PHI{0}\ge u_n))^q} \le q\Gamma_n+\frac{C_3 \|H\|_{1+\delta}\left(1-p\nu\left(\Phi\ge u_n\right)\right)^q}{n^\frac{1}{\kappa}}.$$

We now show that $q\Gamma_n\rightarrow 0$ as $n\rightarrow \infty$ and this will complete the proof because $$\left(1-\frac{pq\nu(\PHI{0}\ge  u_n)}{q}\right)^q \to e^{e^{-Dv}H(x, \theta)}.$$

By Lebesgue's Differentiation Theorem, for $\nu$ a.e. $(x, \theta)$ $$n\prob(\PHI{0}\ge u_n)\rightarrow e^{-Dv}H(x, \theta)$$ and so since $$\frac{t}{p}\rightarrow 0 \mbox{ as } n\rightarrow \infty$$ we have $$\lim_{n\rightarrow 
\infty}qt\prob(\PHI{0}\ge u_n) = 0$$ 

Also, $$nC_1\frac{e^{-v\frac{D+\epsilon}{\kappa}}}{n^{\frac{D+\epsilon}{D\kappa}}}\to 0$$ because $\epsilon > \frac{3D\kappa}{2} - D$.
Further, $$n\frac{C_2n^{\frac{1+\epsilon}{D}}}{e^{-v(1+\epsilon)}t^\alpha}\to 0\mbox{ because } \alpha>\frac{\frac{3}{2}+\frac{1}{D}(1+\epsilon)}{\tau} $$ by equation \eqref{alphacond}.

For the remaining part
\begin{equation}\label{qpsum}
\begin{split}
qp\sum_{j = t}^p\prob(\{\PHI{0}\ge u_n\}\cap f^{-j}\{\PHI{0}\ge u_n\})\le qp^2\prob(\PHI{0}\ge u_n)^2+qp^2C_1\frac{e^{-v\frac{D+\epsilon}{\kappa}}}{n^{\frac{D+\epsilon}{D\kappa}}}\\ +qp^2\frac{C_2n^{\frac{1+\epsilon}{D}}}{e^{-v(1+\epsilon)}t^\alpha}\end{split}
\end{equation}

We show that the terms on the right hand side converge to 0 as $n \to \infty$. Since $qp\prob(\PHI{0}\ge u_n)\to e^{-Dv}H(x, \theta)$, $$qp^2\prob(\PHI{0}\ge u_n)^2 \sim \frac{e^{-2Dv}H(x, \theta)^2}{q} \to 0 \mbox{ as } q\to\infty$$ Next, by \eqref{epschoice}, $$qp^2C_1\frac{e^{-v\frac{D+\epsilon}{\kappa}}}{n^{\frac{D+\epsilon}{D\kappa}}}\sim\frac{1}{n^{\frac{3}{2} - \frac{D+\epsilon}{D\kappa}}}\to 0$$ And, further, $$qp^2\frac{C_2n^{\frac{1+\epsilon}{D}}}{e^{-v(1+\epsilon)}t^\alpha}\sim \frac{1}{n^{\tau\alpha - \frac{3}{2}+\frac{1+\epsilon}{D}}}\to 0$$ Also, from Lemma \ref{dprimeun},
\begin{equation}
qp\sum_{j=1}^t\nu(\PHI{0}>u_n, \PHI{j}>u_n)\to 0 \mbox{ because } t = n^\tau \text{ and }\tau \le \gamma'.
\end{equation}
This completes the proof.
\end{proof}


\section{Applications and Examples}\label{sec-applications}

We now verify the conditions of Theorem~\ref{thm.main} and hence establish corollaries \ref{cor.res} and \ref{cor.gouezel.map}. We will also discuss briefly two other important classes of maps: extensions to  the Manneville-Pommeau type maps and the Viana type maps. In the course of the discussion we will sketch why these maps satifsy all but one of the hypotheses of Theorem \ref{thm.main}.

\subsection{Uniformly and non-uniformly expanding maps of an interval modeled by Young towers}
\subsubsection{Piecewise $C^2$ uniformly expanding maps of the interval}
We suppose that $T: I\to I$ is a piecewise $C^2$ map of  an interval $I$ onto itself in the sense that there is a finite partition $\{I_i\}$ of the 
interval $I$, $T$ is $C^2$ on the interior of each $I_j$, $T:I_j\to I$ is onto and monotone, and  $|T^{'} (x)|>1+\delta$ for all $x$ lying in the interior of each $I_j$. It is known from ~\cite{Gora-Boyarsky}
that such maps possess an absolutely continuous mixing invariant measure $\mu$ and there exists a $C$ such that $\frac{1}{C} \le \frac{d\mu}{dx} \le C$. It is easy to see that there exists a $C_2>0$
such that  $m\{ x: d(x,Tx)<\frac{1}{n} \} \le \frac{C_2}{n}$ for all 
$n$ and furthermore the same argument shows that $m\{ x: d(x,T^jx)<\frac{1}{n} \} \le \frac{C_2}{n}$ for all $j$. 

Such maps possess a Young Tower with exponential return time tails~\cite{Young}, hence,  as shown in ~\cite{gouezel} for a residual
set of  $S^1$ cocycles $h: I\to S^1$ , the skew-extension $f$ of the base map $T$ has exponential  decay of correlations. Thus this class of maps satisfies the conditions of Theorem~\ref{thm.main}.

\subsubsection{Non-Uniformly Expanding Maps Modeled by a Young Tower}
Suppose $T:X\to X$ is a non-uniformly expanding map of an interval with bounded derivative, i.e.,$\sup_{x\in X} |T^{'} (x)|<C$, modeled
by a Young Tower with  exponential return time tails. Collet \cite{collet} has shown that there exists a $\beta>0$ for which $\mu(E_n^X)<\frac{C}{n^\beta}$ and so by Proposition \ref{prop.inherit} we may conclude that the system $f:X\times S^1\to X\times S^1$ defined by $f(x, \theta) = (Tx, \theta+h(x))$ for any measurable cocycle $h$ satisfies this property. Further, Gou\"ezel shows in \cite{gouezel} that for a residual set of H\"older cocycles, such systems satisfy the second hypothesis of Theorem \ref{thm.main} for an arbitrary $\alpha$ (by showing that decay is in fact exponential). Since the map $f$ along the group $S^1$ is an isometry, it's density with respect to the Lebesgue measure is 1 and hence the density of the invariant measure is just the density for $T$. Collet \cite{collet} shows that this density lies in $\L^p$ for some $p$ larger than 1, and so all the hypotheses of Theorem \ref{thm.main} are satisfied.

\subsection{Skew product with a curve of neutral points}\label{skew-prod}

We consider Gou\"ezel's map studied, for instance, in \cite{skew-neutral}. Define $F:S^1\to S^1$ by $F(\omega) = 4\omega$ and $T_\alpha:[0,1]\to [0,1]$ as
\begin{equation}
T_\alpha(x) = 
\begin{cases}
  x(1+2^\alpha x^\alpha) \text{\hspace{.4cm}if $0\le x\le \frac{1}{2}$}\\
2x-1 \text{ \hspace{1cm} if $\frac{1}{2}<x\le 1$}
 \end{cases}
\end{equation}
where $\alpha:S^1\to(0, 1)$ is a map with minimum $\alpha_{\min}$ and a maximum $\alpha_{\max}$ and satisfies
\begin{itemize}
 \item $\alpha$ is $C^2$
\item $0<\alpha_{\min}<\alpha_{\max}<1$
\item $\alpha$ takes the value $\alpha_{\min}$ at a unique point $x_0$ with $\alpha''(x_0)>0$
\item $\alpha_{\max}<\frac{3}{2}\alpha_{\min}$
\end{itemize}
The map $T:S^1\times[0,1]\to S^1\times[0, 1]$ is defined as $T(\omega, x) = (F(\omega), T_{\alpha(\omega)}(x))$. From \cite[Theorem 2.10]{skew-neutral}, the density $H$ of the map $T$ is $\L^{1}$ with respect to the product $\mu\times\mbox{Leb}$ where $\mu$ is the invariant measure on $S^1$ for $F$ (and is the same as the Lebesgue measure). Since $F$ is uniformly expanding we see that hypothesis (a) of Theorem \ref{thm.main} is satisfied. Further, from \cite{person-comm} 
\begin{equation}
\left|\int \Upsilon \Psi\circ T^n -\int \Upsilon \int \Psi \right|\le C n^{1-1/\alpha_{max}}\|\Upsilon\|_{\hat\alpha} \|\Psi\|_{\infty}
\end{equation}
and so hypothesis (b) is also satisfied. Further, by \cite[Theorem 2.10]{skew-neutral}, the density $H$ is Lipschitz on every compact subset of $S^1\times (0,1]$. The only places in the proof of Theorem \ref{thm.main} that we require the density to be in $\L^{1+\delta}$ is to estimate the volume of balls, and this requirement can be replaced by the Lipschitz requirement on every compact subset. 
Recall, that $B_r$ is a ball about a fixed point $(\omega, x)$ of radius $r$, and
\begin{equation*}
 \|\Phi_B-\chi_B\|_1^\nu\le \nu(B_r\setminus B_{r'}) = \int H\chi_{B_r\setminus B_{r'}}d\lambda.
\end{equation*}
Fix a closed ball $\Gamma$ with center $(\omega, x)$. For $r$ sufficiently small, $B_r\subset\Gamma$ and so $\|H\mid_\Gamma\|_{\infty}<\infty$. Therefore 
\begin{equation*}
 \int H\chi_{B_r\setminus B_{r'}} d\lambda \le \|H\mid_\Gamma\|_{\infty}\lambda(B_r\setminus B_{r'})
\end{equation*}
and so bounds of the type of Lemma \ref{decaycorr} may be obtained with $\kappa $ set equal to 1. Now, if we  choose $\epsilon>\frac{D}{2}$, in equation \eqref{qpsum}  we have $$qp^2\frac{e^{-v(D+\epsilon)}}{n^{1+\frac{\epsilon}{D}}} \to 0.$$ The last term in equation \eqref{qpsum} will converge to 0 if the function $\alpha$ is chosen so that $\alpha_{\max}$ satisfies $$\alpha_{\max}<\frac{\min\{\gamma', \frac{1}{2}\}}{\min\{\gamma', \frac{1}{2}\}+\frac{1}{D}\left(1+\frac{D}{2}\right)+\frac{3}{2}}.$$

\subsection{Some Other Extensions}

\subsubsection{The Viana Maps}

Let $T$ be a uniformly expanding map of the circle $S^1$ given by $T(\theta) = d\theta \mbox{ mod 1}$ for $d\ge 16$. Suppose $b:S^1\to S^1$ is a Morse function, that $u_\alpha(\theta, x) = a_0+\alpha b(\theta) - x^2$ and that $a_0$ is chosen so that $x=0$ is pre-periodic for $a_0 - x^2$. Let $\Phi_\alpha(\theta, x) = (T(\theta), u_\alpha(\theta, x))$. From \cite{alves}, for small enough $\alpha$, there is an interval $I \subset (-2, 2)$ for which $\Phi_\alpha(S^1\times I)\subset \mbox{ int }(S^1\times I)$.

Along the base, this map exhibits a uniformly expanding behavior, and thus, from Proposition \ref{prop.inherit}, we can conclude that the first hypothesis of Theorem \ref{thm.main} is satisfied. Also, it has been shown in \cite{gouezel-avmap} that such a system displays a decay of correlations at the rate of $O(e^{-c\sqrt{n}})$ which is faster than any polynomial. From \cite{alves}, we know that the density of the absolutely continuous invariant measure lies in $\L^1(\lambda)$. If we knew that this density was in $\L^{1+\delta}(\lambda)$ for small $\delta>0$, then all the hypotheses of Theorem \ref{thm.main} would be satisfied and in that case the limiting distribution obtained would be  $$\lim_{n\to\infty}\nu(Z_n^{(x, \theta)}<u_n) = e^{H(x, \theta)e^{-2v}}.$$

\subsubsection{Manneville-Pommeau type maps}

We will consider the Liverani-Saussol-Vaienti Map $T:[0, 1]\to [0, 1]$ defined as
\[T(x) = 
\begin{cases}
   x(1+2^{\omega}x^{\omega}) & x\in [0, \frac{1}{2})\\
2x-1 & x\in [\frac{1}{2}, 1]
  \end{cases}
\]

Near the origin, this map is $x\mapsto x+2^\omega x^{1+\omega}$ and the density near the origin is seen to be $h(x)\approx x^{-\omega}$ so $h\in \L^{\frac{1}{\omega}-\epsilon}$ for any $\epsilon>0$. It is a result from \cite{work_in_progress} that $$\mu_X\left\{x:d(T^jx, x)<\frac{1}{n} \text{ for some }0\le j\le g(n)\right\}\le \left(\frac{g(n)}{\sqrt{n}}\right)^{1-\omega}$$ so if we choose $u$ to be a cocycle, $g(n) = n^{\frac{1-\omega}{24}}$ and $\beta = \frac{1-\omega}{8}$, we see that for $Y = S^1$ we have $D = 2$, $\gamma' = \frac{1-\omega}{24}<\frac{\beta}{D}$ and $\mu_X(E_n^X)<\frac{C}{n^\beta}$. Further, since we have an isometry along the fiber, the density $H$ for $\nu$ will lie in $\L^{\frac{1}{\omega}-\epsilon}$ and so all the hypotheses of Theorem \ref{thm.main} are met except that the rate of decay of correlations for such an extension $f = (T, u)$ is not known. If a rate satisfying condition (b) can be established, we will be able to establish the extreme value law.\\

{\bf Acknowledgements:} The author would like to thank his PhD advisor, Matthew Nicol, for his encouragement, support and invaluable inputs. The author would also like to thank Mark Holland for helpful comments and references; S\'ebastien Gou\"ezel for useful discussions about his work, particularly, regarding decay of correlations; Jos\~e Alves and Dimitry Dolgopyat for helpful references to the literature; and the organizers of the Workshop in Chaotic Properties of Dynamical Systems at Warwick, in August of 2007. This research was undertaken as part of the author's PhD and was supported in part by NSF grant DMS-0607345.

\end{document}